\newtheorem{thm}{Theorem}[section]
\newtheorem{cor}[thm]{Corollary}
\newtheorem{lem}[thm]{Lemma}
\newtheorem{prop}[thm]{Proposition}
\newtheorem{conj}[thm]{Conjecture}
\theoremstyle{definition}
\theoremstyle{remark}
\numberwithin{equation}{section}
\begin{document}

\setcounter{page}{1}

\title[On the Linear Independence of Finite Wavelet Systems ]{On the Linear Independence of Finite Wavelet Systems  Generated by Schwartz Functions or Functions with certain behavior at infinity }

\author[Abdelkrim Bourouihiya]{Abdelkrim Bourouihiya\\
\today }
\address{Department of Mathematics, Nova Southeastern University}
\curraddr{3301 College Avenue, Fort Lauderdale, Florida, USA}
\email{\textcolor[rgb]{0.00,0.00,0.84}{ab1221@nova.edu}}



\subjclass[2010]{Primary 47A80; Secondary 46C05,47B40, 47L05,47B10}

\keywords{On the Linear Independence of Finite Wavelet Systems}


\begin{abstract}
One of the motivations to state HRT conjecture on the linear independence of finite Gabor systems was the fact that there are  linearly dependent Finite Wavelet Systems (FWS). Meanwhile, there are also many examples of linearly independent FWS, some of which are presented in this paper. We prove the linear independence of every three point FWS generated by a nonzero Schwartz function  and with any number of points if the FWS is  generated by a nonzero Schwartz function, for which the absolute value of the  Fourier transform  is decreasing at infinity. We also prove  the linear independence of  any FWS generated by  a nonzero square integrable function, for which the Fourier transform has certain behavior at infinity. Such a function  can be any  square integrable function that is a linear complex combination of  real valued rational  and exponential functions.
\end{abstract} \maketitle

\section{ Introduction}

Let $\phi$ be a nonzero measurable function on $\mathbb{R}$ and  let $\Lambda=\{(\lambda_k,\beta_k)\}_{k=1}^N \subset (0, \infty) \times \mathbb{R}$,   the Finite Wavelet System (FWS) generated by $\phi$ and parameterized by $\Lambda$ is the set of time-dilation translates of $\phi$
$$\mathcal{W}(\phi, \Lambda)=\{\phi(\lambda_k x-\beta_k)\}_{k=1}^N.$$

On the one hand, there are many examples of non trivial FWS that are linearly dependent in $L^2(\mathbb{R})$ and others in $L^1(\mathbb{R})$. Indeed, examples of   \emph{the two-scale difference equation}  that has the form
\begin{eqnarray}\label{2scale1}
\phi(x) = \sum_{k=0}^{N} c_k \phi(\lambda x-\beta_k )\quad (a.e)
\end{eqnarray}
has solutions in $L^1(\mathbb{R}) \setminus \{0\}$ for some $\lambda >1$. For each integer $K \geq 2$, the lattice two-scale difference equation has the form
\begin{eqnarray}\label{2scalell}
\phi(x) = \sum_{k=0}^{N} c_k \phi(K x- k )\quad (a.e).
\end{eqnarray}

On the other hand,  Christensen and Lindner proved the linear independence of each FWS generated by a nonzero square integrable function, for which the Fourier transform is continuous and compactly supported \cite{Chr}. Thus, they proved the linear independence of FWS generated by each nonzero function in a dense subspace of $L^2(\mathbb{R})$.  Bownik and  Speegle  also proved the linear independence of Parseval wavelets, which can be compared to the linear independence of  infinite rectangular Gabor systems \cite{Bow}. The main purpose of this paper is to present some cases of linearly independent FWS. However, we incidently prove some results on the two-scale difference equation.

In Section 1, we prove a lemma on the order of regularity of a compactly supported $L^1-$solution (if it exists) to Equation \ref{2scale1}. The lemma is then used to prove the independence of each three point FWS generated by a nonzero Schwartz function. Incidently, the lemma is also used to find an upper bound of the order of regularity of solutions to  Equation \ref{2scale1}. This upper bound and an existing lower bound lead to the exact value of that order for several examples in the literature. In addition, we prove that the equation
\begin{eqnarray}\label{Bernolli}
\phi (x)=\frac{\lambda}{2}(\phi(\lambda x- 1) +\phi(\lambda x + 1))
\end{eqnarray}
has  no solution in $ C^n_c(\mathbb{R})\setminus \{0\}$ if $\alpha < 2^{-\frac{1}{n+1}}$, where $\alpha = 1/\lambda$. To the best of our knowledge, this result is not published prior to this paper. Meanwhile, the important Equation \ref{Bernolli} is related to the so called \emph{symmetric Bernoulli convolutions} and  has been studied for several decades with connections to random walk, harmonic analysis, the theory of algebraic numbers, and dynamical systems \cite{Per,Var}.

In Section 2, we prove that $\mathcal{W}(\phi, \Lambda)$ is linearly independent in the following cases:
\begin{enumerate}
  \item[(1)] $\phi$ is with faster than exponential decay  and  the support of $\phi$ is not compact.
  \item[(2)] $\phi \in \mathcal{S}(\mathbb{R}) \setminus \{0\}$, the space of Schwartz functions, and $card(\Lambda)=3$.
  \item[(3)] $\phi \in \mathcal{S}(\mathbb{R}) \setminus \{0\}$, and $|\widehat{\phi(\gamma)}|$ and $|\widehat{\phi(-\gamma)}|$ are ultimately decreasing.
\end{enumerate}

The proof of Result (1) is pretty easy, while it is not yet proven that a Finite Gabor System (FGS) is linearly independent for all functions with faster than exponential decay. A FGS has the form $\mathcal{G}(g, \Lambda)=\{e^{2\pi i\beta_kx}g(x+\alpha_k)\}_{k=1}^N$. The Heil, Ramanathan,  and  Topiwala (HRT) conjecture states that each FGS generated by a nonzero square integrable function $g$ is linearly independent in $L^2(\mathbb{R})$. The conjecture is still open today even if $g$ is assumed to be in  $\mathcal{S}(\mathbb{R})$ ~\cite{Hei1}. The results of this paper lead to the following conjecture.
\begin{conj}
Each finite wavelet system generated by a nonzero Schwartz function is linearly independent.
\end{conj}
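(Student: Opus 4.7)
The plan is to induct on $N$ using the Fourier-side reformulation of a putative dependence. Taking Fourier transforms in $\sum_{k=1}^N c_k\phi(\lambda_k x-\beta_k)=0$ and ordering $0<\lambda_1<\cdots<\lambda_N$, one obtains
\[
\sum_{k=1}^N \frac{c_k}{\lambda_k}\,e^{-2\pi i\beta_k\gamma/\lambda_k}\,\widehat{\phi}(\gamma/\lambda_k)=0\quad\text{a.e. }\gamma.
\]
The cases $N=1,2$ are routine, and $N=3$ is Result (2) of the paper; these give the base of the induction. For the inductive step, the objective is to show that the top-scale coefficient $c_N$ must vanish, so that what remains is an $(N-1)$-term wavelet dependence to which the induction hypothesis applies.

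To isolate the top-scale term without invoking the monotonicity hypothesis behind Result (3), I would employ a non-stationary-phase / averaging argument on the Fourier side. Multiplying the Fourier identity by $\overline{\widehat{\phi}(\gamma/\lambda_N)}\,e^{2\pi i\beta_N\gamma/\lambda_N}$ and integrating over a smooth window on $[T,2T]$, the diagonal $k=N$ contribution is a positive multiple of $|c_N|^2\int|\widehat{\phi}(\gamma/\lambda_N)|^2\,d\gamma$, while each off-diagonal contribution is an oscillatory integral with linear phase $(\beta_k/\lambda_k-\beta_N/\lambda_N)\gamma$. Whenever this phase is nonzero, repeated integration by parts, using Schwartz decay of $\widehat{\phi}$ and all its derivatives, yields decay of arbitrary polynomial order in $T$, so the diagonal dominates as $T\to\infty$ and forces $c_N=0$.

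The main obstacle is the \emph{resonant} configuration in which $\beta_k/\lambda_k=\beta_N/\lambda_N$ for one or more indices $k<N$. Then the oscillating phase degenerates and the non-stationary-phase argument collapses; one is forced to compare pointwise magnitudes of $\widehat{\phi}(\gamma/\lambda_k)$ and $\widehat{\phi}(\gamma/\lambda_N)$ along a sequence $\gamma_n\to\infty$. For a generic Schwartz $\phi$, $\widehat{\phi}$ may oscillate and even vanish on arbitrarily long sparse intervals, so no soft dominance is available; this is exactly where the ultimate-monotonicity hypothesis of Result (3) was crucial. A natural reduction is to absorb each resonant pair $(\lambda_k,\beta_k),(\lambda_N,\beta_N)$ into a joint dilation structure and apply the $N=3$ base case, but making such a reduction work uniformly in the parameters requires a Diophantine analysis of the triples $(\lambda_k,\beta_k)$ that is not supplied by the present techniques.

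In short, the Fourier-averaging scheme gives the generic case of the conjecture modulo the base step already proved, and reduces the problem to an intricate resonance subcase whose resolution, in my view, is the true content of the conjecture and would require genuinely new input beyond the Schwartz decay of $\widehat{\phi}$.
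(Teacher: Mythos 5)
The statement you are trying to prove is Conjecture~1.1 of the paper: it is explicitly posed as an \emph{open} conjecture (``The results of this paper lead to the following conjecture''), and the paper supplies no proof of it. What the paper does prove are special cases --- the three-point case (Corollary~\ref{three}), the case where $|\widehat{\phi}(\gamma)|$ and $|\widehat{\phi}(-\gamma)|$ are ultimately decreasing (Theorem~\ref{Schwartz}), and the cases of Lemma~\ref{smallbig} and Lemma~\ref{compact}. So there is no proof in the paper to match your argument against, and your own proposal, by your own admission, does not close the problem either: the ``resonant'' configuration $\beta_k/\lambda_k=\beta_N/\lambda_N$ is left unresolved, and that is not a peripheral subcase but essentially the whole difficulty.

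Beyond that, the ``generic'' (non-resonant) half of your argument has a genuine gap. After multiplying by $\overline{\widehat{\phi}(\gamma/\lambda_N)}\,e^{2\pi i\beta_N\gamma/\lambda_N}$ and integrating over a window near $T$, the off-diagonal terms carry a fixed nonzero frequency $\alpha_k=\beta_k/\lambda_k-\beta_N/\lambda_N$; integration by parts against $e^{2\pi i\alpha_k\gamma}$ gains powers of $1/\alpha_k$, not of $1/T$, and the $T$-decay you invoke comes entirely from the Schwartz decay of the amplitude $\widehat{\phi}(\gamma/\lambda_k)\overline{\widehat{\phi}(\gamma/\lambda_N)}$ and its derivatives. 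But the diagonal term $|c_N|^2\int_T^{2T}|\widehat{\phi}(\gamma/\lambda_N)|^2\,d\gamma$ decays just as rapidly, so ``arbitrary polynomial decay in $T$'' of the off-diagonal terms does not give dominance of the diagonal. To conclude $c_N=0$ you would need a \emph{lower} bound on $\int_T^{2T}|\widehat{\phi}(\gamma/\lambda_N)|^2\,d\gamma$ relative to the sup of the off-diagonal amplitudes and their derivatives on the same window, and for a general Schwartz function --- whose Fourier transform may be extremely small on long sparse intervals and spiky elsewhere --- no such bound holds uniformly in $T$. Passing to a favorable subsequence $T_n$ reintroduces exactly the pointwise comparison of $|\widehat{\phi}(\gamma/\lambda_k)|$ against $|\widehat{\phi}(\gamma/\lambda_N)|$ along a sequence going to infinity, which is precisely what the ultimate-monotonicity hypothesis of Theorem~\ref{Schwartz} is there to supply. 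In short, both the resonant and the non-resonant branches of your scheme reduce to the same unproven dominance statement, so the proposal does not establish the conjecture even generically.
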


In Section 3,  we prove the linear independence of $\mathcal{W}(\phi, \Lambda)$ if $\phi$ is a nonzero square integrable function for which $\widehat{\phi}$ is a linear complex combination of functions  $\in \mathcal{L}\mathcal{E}(\mathbb{R})$: the space of \emph{logarithmico-exponential} functions.  This space consists of real valued functions that can be expressed using the standard arithmetic operations ($+,-,\times, \div$) along with the identity, constant, exponential, and logarithmic functions. Thus, we prove the linear independence of $\mathcal{W}(\phi, \Lambda)$ if $\phi$ is a nonzero square integrable rational function, $\phi = e^{-n|x|}$ for integers $n>0$,  $ \phi =  e^{-x^2}$, or (for a random example)
$ \widehat{\phi} = \frac{\gamma\ln |\gamma|}{e^\gamma+e^{-\gamma}}$.

\section{A Lemma on The Two-scale Difference Equation}
The result of the follwing lemma is folklore, though we have not been able to locate an explicit proof in the literature.

\begin{lem}\label{2scale}
 If $\phi \in L^1(\mathbb{R}) \setminus \{0\}$ is  a solution with compact support to Equation  \ref{2scale1}, then $|c_0|, |c_N|  < \lambda$. Further, if $|\phi(x)|/|x-a|^{\mu}$ is bounded for almost all $x$ in a neighborhood of $a$, where $\mu \geq 0$ and $ a=(\lambda-1)^{-1}\beta_0$ (resp. $a=(\lambda-1)^{-1}\beta_N$), then $|c_0|  \leq \lambda^{-\mu}$ (resp. $|c_N|  \leq \lambda^{-\mu}$).
\end{lem}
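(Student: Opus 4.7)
My plan is to exploit the self-similar structure the equation forces on $\phi$ near the endpoints of its essential support. Without loss of generality I assume $\beta_0 < \beta_1 < \cdots < \beta_N$ and that $c_0, c_N$ are nonzero (otherwise the strict inequality for the vanishing coefficient is trivial). I will carry out the argument for $c_0$ at the left endpoint; the claims for $c_N$ follow from the symmetric argument at the right endpoint.

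The first step is to identify the left edge of $\phi$. Let $p = \operatorname{ess\,inf}(\operatorname{supp}\phi)$. For $x < (p+\beta_0)/\lambda$ every translate $\phi(\lambda x - \beta_k)$ vanishes, so the equation forces $\phi = 0$ a.e.\ on $(-\infty,\,(p+\beta_0)/\lambda)$; this gives $p \geq (p+\beta_0)/\lambda$. Conversely, on $[(p+\beta_0)/\lambda,\,(p+\beta_1)/\lambda)$ only the $k=0$ term contributes, so $\phi(x) = c_0\, \phi(\lambda x - \beta_0)$ a.e., and $c_0 \neq 0$ propagates the nonzero behavior of $\phi$ near $p$ down to points immediately to the right of $(p+\beta_0)/\lambda$, giving $p \leq (p+\beta_0)/\lambda$. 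Hence $p = \beta_0/(\lambda-1) = a$.

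Iterating the relation $\phi(x) = c_0 \phi(\lambda x - \beta_0)$ yields
$$\phi(x) = c_0^n\, \phi\!\left(\lambda^n x - \beta_0\, \tfrac{\lambda^n - 1}{\lambda - 1}\right) \quad \text{for a.e.\ } x \in [p,\, p + \epsilon_n),$$
with $\epsilon_n := (\beta_1 - \beta_0)/\lambda^n$. The change of variables $y = p + \lambda^n(x - p)$ then gives
$$\int_p^{p + \epsilon_n} |\phi(x)|\, dx \;=\; \left(\frac{|c_0|}{\lambda}\right)^{n} \int_p^{p + \beta_1 - \beta_0} |\phi(y)|\, dy.$$
The right-hand integral is strictly positive because $p$ lies in the essential support, while the left-hand side tends to $0$ as $n \to \infty$ (since $\phi \in L^1$ and $\epsilon_n \to 0$); compatibility forces $|c_0| < \lambda$.

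For the refined inequality, suppose $|\phi(x)| \leq C |x-p|^{\mu}$ a.e.\ in a neighborhood of $p$. Reading the same iteration in reverse and using $|x-p| = |y-p|/\lambda^n$, for a.e.\ $y \in [p,\, p+\beta_1-\beta_0)$ and all sufficiently large $n$,
$$|\phi(y)| = |c_0|^{-n}\, |\phi(x)| \;\leq\; \frac{C\,|x-p|^{\mu}}{|c_0|^{n}} \;=\; \frac{C}{(|c_0|\lambda^{\mu})^{n}}\, |y - p|^{\mu}.$$
If $|c_0|\lambda^{\mu} > 1$, sending $n \to \infty$ forces $\phi = 0$ a.e.\ on $[p,\, p+\beta_1 - \beta_0)$, contradicting $p \in \operatorname{supp}\phi$; hence $|c_0| \leq \lambda^{-\mu}$. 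The most delicate step I foresee is the endpoint identification, where one must rule out hidden cancellations at the extreme left translate so that $p$ is pinned down as the unique fixed point of $t \mapsto (t + \beta_0)/\lambda$; once that is in hand, both conclusions are clean consequences of the same iteration.
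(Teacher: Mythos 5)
Your proof is correct, and it rests on the same core mechanism as the paper's: near the fixed point $a=(\lambda-1)^{-1}\beta_0$ of the left-edge map $t\mapsto(t+\beta_0)/\lambda$ only the $k=0$ term survives, so $\phi(x)=c_0\,\phi(\lambda x-\beta_0)$ a.e.\ on a right-neighborhood of $a$, and iterating this self-similarity yields both bounds. The execution differs in two places, though. For $|c_0|<\lambda$ the paper propagates vanishing ($|c_0|\ge\lambda$ forces $\phi=0$ a.e.\ on $[\beta_1/\lambda,\beta_1]$, then on successively larger intervals, until $\phi\equiv 0$), whereas you get a one-shot contradiction from the identity $\int_p^{p+\epsilon_n}|\phi|=(|c_0|/\lambda)^n\int_p^{p+\beta_1-\beta_0}|\phi|$ together with $\epsilon_n\to 0$ and the strict positivity of the right-hand integral; that positivity is exactly what your careful identification $p=a$ of the essential left endpoint buys you, a point the paper leaves implicit when it asserts $\mathrm{supp}(\phi)\subset[0,(\lambda-1)^{-1}\beta_N]$. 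For the refined bound the paper integrates $|\phi(x)|/x^{\mu+\varepsilon}$ (finite for $\varepsilon<1$ under the hypothesis), reruns the first argument to get $|c_0|\lambda^{\mu+\varepsilon-1}<1$, and lets $\varepsilon\to 1^-$; you instead read the iteration backwards and transport the pointwise bound $|\phi(x)|\le C|x-a|^{\mu}$ outward to obtain $|\phi(y)|\le C(|c_0|\lambda^{\mu})^{-n}|y-a|^{\mu}$ a.e., giving $|c_0|\lambda^{\mu}\le 1$ directly with no auxiliary $\varepsilon$. Both routes are valid; yours is a bit more self-contained (the endpoint identification is proved rather than asserted, and the strict vs.\ non-strict inequalities fall out cleanly), while the paper's weighted-integral version extends verbatim to the weaker hypothesis $|\phi(x)|/|x-a|^{\mu+\varepsilon}\in L^1$ for all $\varepsilon<1$ that the author records in the remark following the lemma.
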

\begin{proof} Suppose $\phi \in L^1(\mathbb{R}) \setminus \{0\}$ is  a solution with compact support to Equation  \ref{2scale1}. If needed we can replace $\phi$ with $T_{\beta}\phi$, where $\beta=(1-\lambda)^{-1}\beta_0$, to assume without loss of generality that $\beta_0=0$. Hence,  $supp(\phi) \subset [0,(\lambda-1)^{-1}\beta_N]$. We then have
\begin{eqnarray}\label{2scale2}
\phi(x)= c_0  \phi(\lambda x), \mbox{ for almost all }  x \in [0,\frac{\beta_1}{\lambda}].
\end{eqnarray}
Therefore
\begin{eqnarray}\label{2scale3}
\int_{0}^{\frac{\beta_1}{\lambda}} | \phi(x)| dx =\frac{|c_0|}{\lambda}  \int_{0}^{\beta_1} | \phi(x)| dx.
\end{eqnarray}
Thus,  $|c_0|\geq \lambda$ implies $\phi(x)=0$ for almost all $x \in [\frac{\beta_1}{\lambda},\beta_1]$. Using Equality \ref{2scale2} $m$ times leads to $\phi(x)=0$ for almost all $x \in [\frac{\beta_1}{\lambda^m},\beta_1]$. Consequently, we have   $\phi(x)=0$ for almost all $x \in [0,\beta_1]$ and Equality \ref{2scale2} holds for almost all  $ x \in [0,\frac{2\beta_1}{\lambda}]$. Using \ref{2scale2} and \ref{2scale3} as before, we obtain $\phi(x)= 0, \mbox{ for almost all }  x \in [0,2\beta_1]$. After enough iterations of the above process, we obtain the contradiction $\phi(x)=0$ almost everywhere.

Now let $\mu \geq 0$ and assume that $|\phi(x)|/|x|^{\mu}$ is bounded for almost all $x$ in  a neighborhood of $0$. Therefore, $|\phi(x)|/|x|^{\mu+\varepsilon} \in L^1(\mathbb{R})$ for all $\varepsilon<1$. Using Equality \ref{2scale2}, we obtain
\begin{eqnarray*}
\int_{0}^{\frac{\beta_1}{\lambda}} \frac{| \phi(x)|}{x^{\mu+\varepsilon}} dx =|c_0|\lambda^{\mu+\varepsilon-1}  \int_{0}^{\beta_1} \frac{| \phi(x)|}{x^{\mu+\varepsilon}}dx.
\end{eqnarray*}
We then proceed as we did in the first part of this proof to deduce  $|c_0|\lambda^{\mu+\varepsilon-1} <1$ for all $\varepsilon<1$. Consequently, we have $|c_0|\lambda^{\mu} \leq 1$.

The function $\varphi(x)=\phi(-x+(\lambda-1)^{-1}\beta_N)$ satisfies the two-scale equation
\begin{eqnarray*}
\varphi(x) = \sum_{k=0}^{N} c_k \varphi(\lambda x-   (\beta_N -\beta_k )) \quad (a.e).
\end{eqnarray*}
Hence, we can complete the proof of Lemma \ref{2scale}.
\end{proof}

The second condition  in Lemma \ref{2scale} can be replaced with the weaker condition $|\phi(x)|/|x-a|^{\mu+\varepsilon} \in L^1(\mathbb{R})$  $\forall \varepsilon <1 $, where $ a=(\lambda-1)^{-1}\beta_0$ or $a=(\lambda-1)^{-1}\beta_N$). Any one of those conditions holds if  $\phi \in C^\mu_c(\mathbb{R})$ when $\mu$ is a nonnegative integer or $\phi$ is H\"{o}lder continuous with exponent  $\mu$, i.e., there is $M>0$ such that
\begin{equation*}
 |\phi(x) -\phi(y)| < M|x-y|^{\mu}, \quad  \forall x,y \in \mathbb{R}.
\end{equation*}
 Hence, we obtain the following corollary, which bounds from upper the degree of regularity of a compactly supported $L^1$-solution to Equation \ref{2scale1}.
\begin{cor}\label{2scalec1} Suppose $\phi  \in L^1(\mathbb{R})\setminus \{0\}$ is  a compactly supported  solution to Equation  \ref{2scale1}. If $\mu \geq 0$, then the following statements hold.
\begin{enumerate}[(a)]
  \item  If $\phi$ is H\"{o}lder continuous with exponent  $\mu$ or $\phi  \in C^\mu(\mathbb{R})$ when $\mu$ is an integer, then
  \begin{eqnarray}\label{2scaleb}
  \mu \leq \frac{\min (-\ln |c_0|,-\ln |c_N|)}{\ln \lambda}.
  \end{eqnarray}
  \item  Equation  \ref{2scale1} has no solution in $C^{\infty}_c(\mathbb{R})\setminus \{0\}$.
\end{enumerate}
\end{cor}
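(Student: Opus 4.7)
The plan is to read the corollary as a direct application of Lemma \ref{2scale}: each of the regularity hypotheses will be shown to imply the local growth estimate ``$|\phi(x)|/|x-a|^\mu$ bounded in a neighborhood of $a$'' at both endpoints $a=(\lambda-1)^{-1}\beta_0$ and $a=(\lambda-1)^{-1}\beta_N$. The lemma then delivers $|c_0|,|c_N|\leq \lambda^{-\mu}$, and taking logarithms (using $\lambda>1$) produces the inequality \ref{2scaleb}. Part (b) will then follow from (a) by letting $\mu\to\infty$.

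For the H\"older case I would treat the left endpoint, the right one being symmetric via the substitution $\varphi(x)=\phi(-x+(\lambda-1)^{-1}\beta_N)$ used in the proof of Lemma \ref{2scale}. After translating so that $a=0$, the compact support of $\phi$ forces $\phi\equiv 0$ on $(-\infty,0)$; H\"older continuity yields both $\phi(0)=0$ and $|\phi(x)|=|\phi(x)-\phi(0)|\leq M|x|^\mu$ for all $x$, so $|\phi(x)|/|x|^\mu$ is globally bounded. For $\phi\in C^\mu(\mathbb{R})$ with $\mu$ a nonnegative integer, compact support ensures that each continuous derivative $\phi^{(k)}$, $0\leq k\leq \mu$, vanishes outside $\mathrm{supp}(\phi)$ and hence at its boundary. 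Taylor's formula with Lagrange remainder at $a$ then gives
$$\phi(x)=\frac{\phi^{(\mu)}(\xi)}{\mu!}\,(x-a)^{\mu}$$
for some $\xi$ between $x$ and $a$, and continuity of $\phi^{(\mu)}$ on a compact set bounds $|\phi(x)|/|x-a|^\mu$ near $a$.

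Part (b) is then a one-line consequence: if $\phi\in C_c^\infty(\mathbb{R})\setminus\{0\}$ solved Equation \ref{2scale1}, we may assume (after discarding any zero leading or trailing coefficients) that $c_0\neq 0$, and (a) would force $\mu\leq -\ln|c_0|/\ln\lambda$ for every nonnegative integer $\mu$, which is absurd. The only real delicacy is justifying that $\phi^{(k)}(a)=0$ for $0\leq k\leq \mu-1$ at each boundary point $a$ of $\mathrm{supp}(\phi)$; this vanishing is precisely what makes the Taylor remainder genuinely $O(|x-a|^\mu)$ and allows Lemma \ref{2scale} to be invoked with the full exponent $\mu$ rather than anything smaller.
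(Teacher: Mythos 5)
Your proposal is correct and follows essentially the same route as the paper: the paper also deduces the corollary by checking that H\"older continuity (via $\phi\equiv 0$ to the left of $a$) or membership in $C^\mu_c(\mathbb{R})$ (via vanishing derivatives and Taylor's formula) implies the local bound $|\phi(x)|/|x-a|^\mu$ near both endpoints, then invokes Lemma \ref{2scale} to get $|c_0|,|c_N|\leq\lambda^{-\mu}$ and takes logarithms, with part (b) obtained by letting $\mu\to\infty$. Your write-up simply makes explicit the Taylor-remainder details that the paper leaves as a one-line remark.
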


Equation \ref{2scale2} easily implies that $\phi$ must be discontinuous at $a$, where $ a=(\lambda-1)^{-1}\beta_0$ (resp. $a=(\lambda-1)^{-1}\beta_N$), if $|c_0|  \geq 1 $ (resp. $|c_N|  \geq 1$). This fact is noticed by Colella and Heil in the lattice case \cite{Col}. For the lattice case also, Daubechiesf and  Lagarias used the infinite matrix product representations  to bound from below the degree of regularity of a compactly supported $L^1$-solution to Equation \ref{2scalell} \cite{Dau1}. Curiously, the upper bound computed by Corollary \ref{2scalec1} coincides with the optimal degree of regularity for each example listed in \cite{Dau1}. For example, the exact degree of regularity of the Rham function that is the normalized $L^1$-solution to the equation
\begin{eqnarray*}
\phi(x)= \phi(3x)+\frac{1}{3}[\phi(3x-1)+\phi(3x+1)]+\frac{2}{3}[\phi(3x-2)+\phi(3x+2)]
\end{eqnarray*}
is equal to the upper bound computed using Corollary \ref{2scalec1}:
\begin{eqnarray*}
\frac{-\ln(2/3)}{\ln 3}\simeq 0.36907024642.
\end{eqnarray*}

We finish this section with the following corollaries for the case $N=2$.

\begin{cor}\label{2scalec2} Suppose Equation  \ref{2scale1} with $N=2$ has a compactly supported solution $\phi \in L^1(\mathbb{R}) \setminus \{0\}$. Then the following statements hold.
\begin{enumerate}[(a)]
  \item If $\lambda > 2$,  then $\phi$ is not bounded.
  \item If $\lambda = 2$ and $\phi$ is bounded, then $c_0=c_1=1$.
  \item If $1<\lambda < 2$ and $\phi$ is H\"{o}lder continuous with exponent  $\mu$ or $\phi  \in C^\mu(\mathbb{R})$ when $\mu$ is an integer, then
  \begin{eqnarray}\label{2scaleb}
  \mu \leq \frac{1}{\log_2 \lambda}-1.
  \end{eqnarray}
\end{enumerate}
\end{cor}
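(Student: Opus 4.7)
The plan is to pair Lemma \ref{2scale} with a moment analysis of the two-scale equation in order to pin down the relation between $c_0+c_1$ and a power of $\lambda$. After translating so that $\beta_0 = 0$ (writing $\beta = \beta_1 > 0$), I would introduce $I_k = \int x^k \phi(x)\,dx$, which is finite for every integer $k \geq 0$ because $\phi$ is compactly supported and integrable. Multiplying the two-scale equation by $x^k$, integrating, and performing the substitutions $y = \lambda x$ and $y = \lambda x - \beta$ in the two resulting integrals should yield the recurrence
\[
\left(1 - \frac{c_0 + c_1}{\lambda^{k+1}}\right) I_k \;=\; \frac{c_1}{\lambda^{k+1}} \sum_{j=0}^{k-1} \binom{k}{j} \beta^{\,k-j} I_j.
\]

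Since $\phi \in L^1(\mathbb{R}) \setminus \{0\}$ is compactly supported, $\widehat{\phi}$ is a nonzero entire function (Paley--Wiener), so its Taylor expansion at $0$ has a smallest nonvanishing coefficient; equivalently, there is a smallest integer $k \geq 0$ with $I_k \neq 0$. Substituting this $k$ into the recurrence kills the right-hand side and yields the key identity $c_0 + c_1 = \lambda^{k+1}$. All three parts then follow quickly from this identity combined with Lemma \ref{2scale}. For (a) and (b), $\phi$ bounded and Lemma \ref{2scale} applied with $\mu = 0$ give $|c_0|, |c_1| \leq 1$, so $\lambda^{k+1} \leq |c_0| + |c_1| \leq 2$: in (a) this contradicts $\lambda > 2$ for any $k \geq 0$, while in (b) it forces $k = 0$, and then $c_0 + c_1 = 2$ together with $|c_0|, |c_1| \leq 1$ leaves only $c_0 = c_1 = 1$. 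For (c), the H\"older or $C^\mu$ hypothesis upgrades Lemma \ref{2scale} to $|c_0|, |c_1| \leq \lambda^{-\mu}$, and $\lambda^{k+1} \leq 2\lambda^{-\mu}$ rearranges (weakest at $k=0$) to $\mu \leq \log_\lambda 2 - 1 = 1/\log_2\lambda - 1$.

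The main delicacy is the second step: verifying that $\widehat{\phi}$ is entire and therefore admits a smallest nonvanishing Taylor coefficient at the origin, which relies only on the compact support and nontriviality of $\phi$. Once that ingredient is in place, everything else reduces to combining the moment recurrence with the sharp bounds already furnished by Lemma \ref{2scale}.
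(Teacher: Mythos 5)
Your proof is correct and follows the route the paper intends: the corollary is stated without proof as a consequence of Lemma \ref{2scale}, and your argument combines exactly the bounds $|c_0|,|c_1|\leq\lambda^{-\mu}$ from that lemma with the coefficient sum rule $c_0+c_1=\lambda^{k+1}$. Your moment recurrence, using the first nonvanishing moment $I_k$ (which exists since $\widehat{\phi}$ is entire and nonzero), is a careful way to establish that sum rule even when $\int\phi=0$, which the paper leaves implicit.
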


The case $N=2$ includes Equation \ref{Bernolli} that  has always a unique non trivial normalized solution $f_\alpha$, where $ \alpha=1/\lambda$, that is a compactly supported measure \cite{Per,Var}. Finding all values of $ \alpha \in (\frac{1}{2},2)$, for which the measure $f_\alpha$ is absolutely continuous is still today an open problem.   Erd\H{o}s proved for every integer $n\geq 0$ there is $a(n)<1$ such that for almost all $\alpha \in (a(n),1)$, $f_{\alpha} \in C^n_c(\mathbb{R}) \setminus \{0\}$ \cite{Erd}. Erd\H{o}s also proved that there are $\alpha$ for which the measure $f_\alpha$ is not absolutely continuous.  Another breakthrough result is due to B. Solomyak, who proved  for almost all $\alpha \in (\frac{1}{2},1)$, $f_{\alpha} \in L^2(\mathbb{R}) \setminus \{0\}$ and for almost all $\alpha \in (\frac{1}{\sqrt{2}},1)$, $f_{\alpha}$ is continuous \cite{Per}. However, it is not yet known if there exists $\alpha \in (\frac{1}{2},\frac{1}{\sqrt{2}})$ for which $f_{\alpha}$ is continuous.  Using  Corollary \ref{2scalec2}.d, we deduce the following result.
\begin{cor}  Equation \ref{Bernolli} has no solution $f_{\alpha}  \in C^n_c(\mathbb{R})\setminus \{0\}$ if $\alpha < 2^{-\frac{1}{n+1}}$. \end{cor}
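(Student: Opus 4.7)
The plan is to derive this corollary as a direct contrapositive application of Corollary \ref{2scalec2}(c). Equation \ref{Bernolli} is precisely the case $N=2$ of the two-scale difference equation \ref{2scale1}, with symmetric coefficients $c_0 = c_1 = \lambda/2$ and shifts $\beta_0 = -1$, $\beta_1 = 1$. Since the normalized solution $f_\alpha$ of interest exists for $\alpha \in (1/2, 1)$, we have $\lambda = 1/\alpha \in (1, 2)$, matching the hypothesis of that corollary.

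Suppose toward a contradiction that $f_\alpha \in C^n_c(\mathbb{R}) \setminus \{0\}$ solves \ref{Bernolli}. Since $C^n_c \subset C^n$, Corollary \ref{2scalec2}(c) applied with the integer case $\mu = n$ yields
$$n \leq \frac{1}{\log_2 \lambda} - 1.$$
A one-line rearrangement gives $\log_2 \lambda \leq 1/(n+1)$, hence $\lambda \leq 2^{1/(n+1)}$ and $\alpha = 1/\lambda \geq 2^{-1/(n+1)}$, contradicting the hypothesis $\alpha < 2^{-1/(n+1)}$.

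There is no real obstacle: the substantive work already lives in Lemma \ref{2scale} and Corollary \ref{2scalec2}, both of which are proved earlier. The only point deserving brief mention is that the hypothesis of Corollary \ref{2scalec2}(c) (inherited from Lemma \ref{2scale}) genuinely applies in the $C^n_c$ setting, namely that $|\phi(x)|/|x-a|^n$ is bounded near each endpoint $a = \pm(\lambda-1)^{-1}$ of the natural support interval $[-1/(\lambda-1), 1/(\lambda-1)]$. This is immediate from Taylor's theorem with Peano remainder: any $C^n_c$ function supported in that interval must have its first $n$ derivatives vanish at the boundary, so $\phi(x) = o((x - a)^n)$ as $x \to a$, which is stronger than required.
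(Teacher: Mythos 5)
Your proposal is correct and is essentially the paper's own argument: the paper derives this corollary in one line by citing Corollary \ref{2scalec2} (the symmetric coefficients $c_0=c_1=\lambda/2$ turn the bound $\mu\le\min(-\ln|c_0|,-\ln|c_1|)/\ln\lambda$ into $\mu\le 1/\log_2\lambda-1$), and your Taylor-remainder justification of the boundedness hypothesis is exactly the observation the paper makes in the remark following Lemma \ref{2scale}. The only cosmetic point is that your reduction to $\lambda\in(1,2)$ leaves out $\alpha\le 1/2$, but those cases are immediate from parts (a)--(b) of the same corollary (or from Lemma \ref{2scale} with $\mu=1$ when $\lambda=2$), so nothing substantive is missing.
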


Therefore, there is no $\alpha \in (\frac{1}{2},\frac{1}{\sqrt{2}})$ for which $f_{\alpha}$ is $C^1$.

\section{FWS generated by Schwartz Functions}
We say that a measurable function $\phi$ is with faster than polynomial decay if
$$\lim_{x \rightarrow \pm \infty} x^n\phi(x)=0, \quad  \forall n >0.$$

We say that a measurable function $\phi$ is with faster than exponential decay if
$$\lim_{x \rightarrow \infty} e^{tx}\phi(x)=\lim_{x \rightarrow -\infty} e^{-tx}\phi(x)=0, \quad  \forall t >0.$$

If it exists, $\phi^{(n)}$ will denote the $n^{th}$ derivative of $\phi$.
\begin{lem}\label{smallbig}
Let $\phi \in L^{1}(\mathbb{R}) \setminus \{0\}$. Let $ \Lambda=\{(\lambda_{k},\beta_{k})\}_{k=0}^{N} \subset (0,\infty) \times \mathbb{R}$.  Then $\mathcal{W}(\phi, \Lambda)$ is linearly independent in the following cases.
\begin{enumerate}[(a)]
\item  $\phi$ is with faster than exponential decay and  the support of $\phi$ is not compact.
  \item  $\phi$ is with faster than polynomial decay, the support of $\phi$ is not compact, and $\lambda_k<\lambda_{0}$ for $k=1,\dots,N$.
  \item  $\phi \in \mathcal{C}^{\infty}(\mathbb{R})$, $\phi^{(n)} \in L^{1}(\mathbb{R}) \setminus \{0\}$ for each $n>0$,  and $\lambda_k>\lambda_{0}$ for $k=1,\dots,N$.
\end{enumerate}
\end{lem}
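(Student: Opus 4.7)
Assume, in each case, a nontrivial relation $\sum_{k=0}^N c_k\,\phi(\lambda_k x-\beta_k)=0$ a.e. A preliminary lemma, obtained by Fourier-transforming and observing that the resulting trigonometric polynomial factor of $\hat\phi$ has only isolated zeros, states that the translates of a nonzero $L^1$ function are linearly independent. This lets me reduce to the setup where the distinguished extremal scale $\lambda_*$ (the min or max of the $\lambda_k$'s with nonzero coefficient, depending on the case) is uniquely attained. Solving for the corresponding term and substituting $y=\lambda_* x-\beta_*$ yields the functional equation
\begin{equation*}
\phi(y)=\sum_{k\ne *} d_k\,\phi(\mu_k y+\eta_k),\qquad \mu_k=\lambda_k/\lambda_*,\ \eta_k=\mu_k\beta_*-\beta_k,\ d_k=-c_k/c_*.
\end{equation*}

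For case (c), I take $\lambda_*$ to be the unique largest (available after the preliminary reduction), so $\mu_k<1$. Differentiating $n$ times in $y$ gives $\phi^{(n)}(y)=\sum d_k\mu_k^n\phi^{(n)}(\mu_k y+\eta_k)$, and taking $L^1$-norms (using $\phi^{(n)}\in L^1\setminus\{0\}$) produces $1\le\sum|d_k|\mu_k^{n-1}$. Since every $\mu_k<1$, the right side tends to $0$ as $n\to\infty$: the desired contradiction. The only technical work here is the reduction to a unique maximum.

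For case (b), $\lambda_0$ is the unique largest by hypothesis so $\mu_k<1$ automatically. Iterating the functional equation $n$ times produces $\phi(y)=\sum_{|w|=n}D_w\,\phi(M_w y+E_w)$ with $M_w\le\mu_{\max}^n\to 0$ and $|E_w|$ uniformly bounded in $n$ (geometric series in $\mu_{\max}<1$), so the arguments $M_w y+E_w$ lie eventually in the compact attractor of the contractive IFS $\{y\mapsto\mu_k y+\eta_k\}$. Faster-than-polynomial decay of $\phi$ then bounds the right-hand side by $C_N|y|^{-N}$ for every $N$, while non-compact support requires the equation to hold at arbitrarily large $|y|$ with a nonzero $\phi(y)$: this forces compact essential support and contradicts the hypothesis.

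For case (a), whenever the preliminary reduction yields a unique largest $\lambda_k$ the argument of case (b) applies verbatim, since faster-than-exponential decay is strictly stronger than faster-than-polynomial. Otherwise I take $\lambda_*$ to be the unique smallest, so $\mu_k>1$, and iterate to obtain $\phi(y)=\sum_{|w|=n}D_w\,\phi(M_w y+E_w)$ with $M_w\ge\mu_{\min}^n$. Selecting $|y|$ of the critical size $(\mu_{\max}/\mu_{\min})^n$, available by non-compact support, gives the uniform lower bound $|M_w y+E_w|\ge C\mu_{\max}^n$, and faster-than-exponential decay provides $|\phi(M_w y+E_w)|\le C_t\exp(-t\,\mu_{\max}^n)$ for every $t>0$, a super-exponential suppression that dominates the at-most-exponential factor $\sum|D_w|\le(\sum|d_k|)^n$ and forces $\phi(y_n)=0$ along a sequence $y_n\to\infty$. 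A bootstrapping step, feeding the sharper decay back into the iteration, upgrades the pointwise vanishing to compact essential support, contradicting non-compact support. The delicate point is the coupling of the iteration depth $n$ to the magnitude of $|y|$; this is where the hypothesis of non-compact support is genuinely used.
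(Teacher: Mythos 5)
Your overall strategy --- iterate the functional equation as an IFS and exploit pointwise decay of $\phi$ along the orbits --- is genuinely different from the paper's, which never iterates: it multiplies the single relation by a weight ($e^{tx}$ in case (a), $x^n$ in case (b), or differentiates $n$ times in case (c)), integrates over a half-line, changes variables term by term, and lets the parameter tend to infinity. As written, your version has two gaps I believe are fatal. First, the reduction to a uniquely attained extremal scale does not follow from the linear independence of translates: that lemma only rules out a relation purely among translates, whereas here the group of terms sharing the extremal scale sums to a nonzero function that cannot be replaced by a single term $\phi(\lambda_* x-\beta_*)$, so you cannot ``solve for'' one of them. This matters concretely in your case (c): the hypothesis $\lambda_k>\lambda_0$ makes $\lambda_0$ the unique \emph{smallest} scale, while your estimate $1\le\sum_k|d_k|\mu_k^{\,n-1}$ requires the isolated scale to be the unique \emph{largest}; if the largest scale is tied, the tied translates contribute $\sum_j|d_j|$ with no decay in $n$ and the contradiction evaporates. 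The paper's device for handling ties is to keep the whole extremal-scale group on the right-hand side as strictly shifted translates $\phi(x-\beta_k)$ with $\beta_k<0$, whose contribution to the weighted integral carries the factor $e^{t\beta_k}\to 0$ as $t\to\infty$.

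Second, your case (b) is logically inverted. When all $\mu_k<1$, the iterated arguments $M_wy+E_w$ converge into a fixed compact set (the attractor of the IFS), and there the decay of $\phi$ at infinity gives no information at all --- an $L^1$ function with fast decay need not even be essentially bounded on a compact set --- while $\sum_{|w|=n}|D_w|=(\sum_k|d_k|)^n$ may grow geometrically; the asserted bound $C_N|y|^{-N}$ for the right-hand side simply does not follow. Since one branch of your case (a) invokes case (b) ``verbatim,'' it inherits this defect; the other branch additionally rests on pointwise evaluation of an a.e.-defined $L^1$ function along orbits and on the lower bound $|M_wy+E_w|\ge C\mu_{\max}^n$, which is not automatic because with $\mu_k>1$ the offsets $E_w$ themselves grow like $\mu_{\max}^n$ and can cancel $M_wy$ unless signs are controlled. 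All of this is avoided by the paper's one-step weighted-integral estimate over $[A,\infty)$, which handles multiplicities and requires no iteration; I would recommend replacing the IFS machinery with that argument.
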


\begin{proof}  Suppose $\mathcal{W}(\phi, \Lambda)$ is linearly dependent. We may assume  without loss of generality there are $c_1,\dots,c_N \in \mathbb{C} $ such that
\begin{eqnarray}\label{smallbig1}
 \phi(x)= \sum_{k=1}^{M}  c_k \phi(x-\beta_k)+ \sum_{k=M+1}^{N}  c_k \phi(\lambda_k x-\beta_k),
\end{eqnarray}
where $\lambda_{0}=1$, $\lambda_{k} \neq 1 $ for $k=M+1, \dots, N$,  $\beta_{0}=0$, $\beta_{1},\dots, \beta_M <0$, and $\beta_{M+1},\dots, \beta_N \in \mathbb{R}$.

For the first two cases, we may also assume that $\lambda_{M+1}, \dots, \lambda_N>1$.

\emph{(a)}  The fact that the support of $\phi$ is not compact leads to two possibilities:
\begin{eqnarray*}
0<\int_A^\infty |\phi(x)e^{tx}| dx < \infty, \quad \forall t,A>0 \mbox{ or } 0<\int_{-\infty}^A |\phi(x)e^{tx}| dx < \infty, \quad\forall t,A<0.
\end{eqnarray*}
Suppose the first possibility holds. Let $ t>0$ and let
$A$ be big enough to have $\lambda_k A -\beta_k \geq A, \forall k=M+1,\dots, N$. We have
\begin{eqnarray*}
\int_A^\infty |\phi(x)e^{tx}| dx &\leq &  \sum_{k=1}^{M}  |c_k|e^{t\beta_k }\int_A^\infty |\phi(x)|e^{tx} dx\\
&+& \sum_{k=M+1}^{N}  |c_k| e^{t[(1-\lambda_k) A+\beta_k]}\int_A^\infty |\phi(x)e^{tx}| dx.
\end{eqnarray*}
Simplifying the last inequality and letting  $t \rightarrow \infty$ lead to the contradiction $1 \leq 0$.

Similarly, we can obtain a contradiction if the second possibility holds.

\emph{(b)}  In this case $M=0$ and the right side of Equality \ref{smallbig1} does not includes the first sum. If needed, we can replace $\phi$ with $T_\beta \phi$, where $\beta$ is such that $\beta_k+(\lambda_k-1)\beta< 0, \forall k=1,\dots, N$, to assume without loss of generality that $\beta_k< 0, \forall k=1,\dots, N$. The fact that the support of $\phi$ is not compact leads to two possibilities:
\begin{eqnarray*}
0<\int_A^\infty |\phi(x)x^n| dx < \infty, \quad \forall A,n>0 \mbox{ or } 0<\int_{-\infty}^{-A} |\phi(x)x^n| dx < \infty, \quad\forall A, n>0.
\end{eqnarray*}

Suppose the first possibility holds. We have
\begin{eqnarray*}
 \int_{0}^{\infty}|\phi(x)x^n|  dx \leq  \sum_{k=1}^{N} \frac{|c_k|}{\lambda_k^{n}}  \int_{0}^{\infty} |\phi(\lambda_k x-\alpha_k )(\lambda_k x-\beta_k)^n| (\frac{\lambda_k x}{\lambda_k x-\beta_k})^n dx
\end{eqnarray*}
and so
\begin{eqnarray}\label{smallbig2}
 \int_{0}^{\infty}|\phi(x)x^n|  dx \leq \sum_{k=1}^{N} \frac{|c_k|}{\lambda_k^{n+1}}  \int_{0}^{\infty} |\phi(x)x^n|dx
\end{eqnarray}
Simplifying Inequality \ref{smallbig2} and Letting $n \rightarrow \infty$ lead to the contradiction $1 \leq 0$.

Similarly, we can find a contradiction if the second possibility holds.

The proof of statement \emph{(c)} is straightforward.

\end{proof}

\begin{cor}\label{three}
Every three point FWS generated by a nonzero Schwartz function is  linearly independent.
\end{cor}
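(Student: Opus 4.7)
Assume for contradiction a nontrivial relation $\sum_{k=1}^3 c_k\phi(\lambda_k x-\beta_k)=0$; I may assume all $c_k\neq 0$, since a 1- or 2-term dependence is excluded by the same arguments applied to fewer terms. The pairs $(\lambda_k,\beta_k)$ being distinct means that whenever two dilations agree the corresponding translations differ. I split on the multiplicities in the multiset $\{\lambda_1,\lambda_2,\lambda_3\}$.

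When all three $\lambda_k$ coincide, a common rescaling reduces the relation to one among translates of $\phi$. Taking the Fourier transform yields $\hat\phi(\gamma)\sum_k c_k e^{-2\pi i\tau_k\gamma}=0$ with distinct $\tau_k$; since $\hat\phi$ is continuous and nonzero, it is nonzero on a nonempty open set, on which the exponential sum must vanish, and by analyticity it vanishes everywhere, forcing all $c_k=0$ by linear independence of distinct characters. When the minimum of $\{\lambda_1,\lambda_2,\lambda_3\}$ is unique, I appeal to Lemma~\ref{smallbig}(c) with the smallest $\lambda_k$ taken as the $\lambda_0$ of that lemma: $\phi\in\mathcal{S}(\mathbb{R})\setminus\{0\}$ gives $\phi\in C^\infty$ and $\phi^{(n)}\in\mathcal{S}\setminus\{0\}\subset L^1\setminus\{0\}$ for each $n\ge 1$ (otherwise $\phi$ would be a polynomial, hence $0$ in $\mathcal{S}$), so Lemma~\ref{smallbig}(c) applies and contradicts the assumed dependence.

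The remaining case is $\lambda_1=\lambda_2=:\mu<\lambda_3=:\Lambda$ with $\beta_1\neq\beta_2$. Taking Fourier transforms of the dependence and factoring $e^{-2\pi i\beta_1\xi}$ out of the $\hat\phi(\xi)$ terms yields the functional equation
\[
P(\xi)\,\hat\phi(\xi)\;=\;K\,e^{-2\pi i\sigma\xi}\,\hat\phi(\xi/\rho),
\]
where $\rho:=\Lambda/\mu>1$, $P(\xi):=c_1+c_2 e^{-2\pi i(\beta_2-\beta_1)\xi}$ is bounded by $|c_1|+|c_2|$, $K:=-c_3/\rho\neq 0$, and $\sigma\in\mathbb{R}$. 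Iterating this identity along $\xi,\rho\xi,\dots,\rho^n\xi$ for $\xi$ avoiding the zero set of $P(\rho^k\,\cdot\,)$, $k=1,\dots,n$, gives $|\hat\phi(\rho^n\xi)|\ge|\hat\phi(\xi)|\bigl(|K|/(|c_1|+|c_2|)\bigr)^n$. Combining with the Schwartz decay $|\hat\phi(\rho^n\xi)|\le A_N(\rho^n|\xi|)^{-N}$, choosing $N$ large enough that $|K|\rho^N/(|c_1|+|c_2|)>1$, and letting $n\to\infty$ forces $\hat\phi(\xi)=0$. The main obstacle is that $P$ can vanish, precisely when $|c_1|=|c_2|$, but then its zero set is an arithmetic progression, so the set of $\xi$ on which the iteration fails is a countable union of arithmetic progressions, hence countable. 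Thus $\hat\phi$ vanishes on a co-countable set, and continuity forces $\hat\phi\equiv 0$, i.e.\ $\phi\equiv 0$, the desired contradiction.
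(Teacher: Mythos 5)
Your proof is correct and exhaustive over the three configurations of the multiset $\{\lambda_1,\lambda_2,\lambda_3\}$, and for two of them it coincides with what the paper intends: the all-equal case via linear independence of distinct characters on a set where $\widehat{\phi}\neq 0$ (this is the paper's ``almost obvious'' case), and the unique-minimum case via Lemma \ref{smallbig}(c), where your check that $\phi^{(n)}\in L^1(\mathbb{R})\setminus\{0\}$ for a nonzero Schwartz $\phi$ is exactly the needed verification. Where you genuinely diverge is the remaining configuration $\lambda_1=\lambda_2=\mu<\Lambda=\lambda_3$: the paper splits it between Lemma \ref{smallbig}(b) (when $\mathrm{supp}\,\phi$ is not compact) and Corollary \ref{2scalec1} (when it is), and in the compactly supported sub-case the reduction to Equation \ref{2scale1} produces the dilation ratio $\mu/\Lambda<1$, which lies outside the range actually handled in the proof of Lemma \ref{2scale}; your Fourier-side iteration of $P(\xi)\widehat{\phi}(\xi)=K e^{-2\pi i\sigma\xi}\widehat{\phi}(\xi/\rho)$ with $\rho>1$ against the rapid decay of $\widehat{\phi}$ treats both sub-cases uniformly and is the more robust argument here. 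One simplification: you do not need to avoid the zero set of $P$, nor the co-countability detour at the end. Taking absolute values in the identity gives
\begin{equation*}
|K|\,\bigl|\widehat{\phi}(\xi/\rho)\bigr| \;=\; |P(\xi)|\,\bigl|\widehat{\phi}(\xi)\bigr| \;\le\; (|c_1|+|c_2|)\,\bigl|\widehat{\phi}(\xi)\bigr|
\end{equation*}
for \emph{every} $\xi$ (at a zero of $P$ it simply forces $\widehat{\phi}(\xi/\rho)=0$), so the lower bound $|\widehat{\phi}(\rho^n\xi)|\ge \bigl(|K|/(|c_1|+|c_2|)\bigr)^n|\widehat{\phi}(\xi)|$ holds unconditionally and yields $\widehat{\phi}\equiv 0$ directly.
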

\begin{proof} let $\mathcal{W}=\{\phi(\lambda_k x-\beta_k)\}_{k=1}^3$, where $\phi \in \mathcal{S}(\mathbb{R}) \setminus \{0\}$. The only case that is not following Lemma \ref{smallbig} or Corollary \ref{2scalec1}  is the almost obvious case when $\lambda_1=\lambda_2=\lambda_3$.
\end{proof}

\begin{lem} \label{compact}
Let $\phi \in L^2(\mathbb{R}) \setminus \{0\}$. Every FWS generated by $\phi$ is linearly independent if one of the following statement is satisfied.
\begin{enumerate}[(i)]
 \item There is an $\varepsilon>0$ such that $\widehat{\phi}(\gamma)=0, \forall \gamma \in (-\varepsilon,\varepsilon) $.
  \item $\widehat{\phi}$  is compactly supported.
\end{enumerate}
\end{lem}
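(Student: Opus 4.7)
The plan is to Fourier-transform any putative dependence relation and exploit the fact that the zero set of the dilated transform $\widehat{\phi}(\gamma/\lambda_k)$ near the origin (resp.\ its support near infinity) scales monotonically with $\lambda_k$. Concretely, if $\sum_{k=1}^N c_k\, \phi(\lambda_k x - \beta_k) = 0$ a.e., then
\begin{equation*}
\sum_{k=1}^N \frac{c_k}{\lambda_k}\, e^{-2\pi i \beta_k \gamma/\lambda_k}\,\widehat{\phi}(\gamma/\lambda_k) = 0 \quad \text{a.e.\ in } \gamma.
\end{equation*}
I will group the indices by common dilation: let $\lambda^{(1)}, \ldots, \lambda^{(p)}$ be the distinct values taken by the $\lambda_k$ and $G_j = \{k : \lambda_k = \lambda^{(j)}\}$. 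Within each $G_j$ the $\beta_k$ are pairwise distinct (otherwise $\mathcal{W}(\phi,\Lambda)$ would contain two identical nonzero elements), and this distinctness will drive the final step in each case.

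For case \emph{(i)}, I set $\varepsilon^{*} = \sup\{r > 0 : \widehat{\phi} = 0 \text{ a.e.\ on } (-r, r)\} \ge \varepsilon$; by maximality, every neighborhood of $\pm \varepsilon^{*}$ meets $\{\widehat{\phi} \neq 0\}$ in a set of positive measure. Ordering the distinct dilations as $\lambda^{(1)} < \lambda^{(2)} < \cdots$, I look at the annulus $A_1 = \{\gamma : \lambda^{(1)} \varepsilon^{*} < |\gamma| < \lambda^{(2)} \varepsilon^{*}\}$. Every term with $k \notin G_1$ vanishes there because then $|\gamma/\lambda_k| < \varepsilon^{*}$, so the identity collapses to
\begin{equation*}
\widehat{\phi}(\gamma/\lambda^{(1)}) \sum_{k \in G_1} c_k\, e^{-2\pi i \beta_k \gamma/\lambda^{(1)}} = 0 \quad \text{on } A_1.
\end{equation*}
The first factor is nonzero on a subset of $A_1$ of positive measure by the extremal property of $\varepsilon^{*}$, so the exponential polynomial vanishes on that subset, hence identically (it is entire in $\gamma$), and linear independence of characters with distinct exponents forces $c_k = 0$ for all $k \in G_1$. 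I then iterate the same argument on the reduced relation, peeling off $G_2, G_3, \ldots, G_p$ in turn on annuli $A_j = \{\lambda^{(j)}\varepsilon^{*} < |\gamma| < \lambda^{(j+1)}\varepsilon^{*}\}$.

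Case \emph{(ii)} is the mirror image: let $R = \inf\{r > 0 : \widehat{\phi} = 0 \text{ a.e.\ for } |\gamma| > r\}$, order the distinct dilations decreasingly $\lambda^{(1)} > \lambda^{(2)} > \cdots$, and first work on $\{\lambda^{(2)} R < |\gamma| < \lambda^{(1)} R\}$, where only the $G_1$ terms survive because $\widehat{\phi}(\gamma/\lambda_k) = 0$ whenever $|\gamma| > \lambda_k R$; then iterate inward. The step I expect to require the most care is the positive-measure claim at each stage, namely that $\widehat{\phi}(\gamma/\lambda^{(j)})$ genuinely fails to vanish on a positive-measure subset of the chosen annulus. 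This is guaranteed by the extremality of $\varepsilon^{*}$ (resp.\ $R$) together with the fact that the successive annuli keep $\gamma/\lambda^{(j)}$ just outside the vanishing neighborhood (resp.\ just inside the essential support); it should be justified by citing that sharpness rather than simply by $\widehat{\phi}$ being nontrivial.
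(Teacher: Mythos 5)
Your proof is correct, but it takes a genuinely different route from the paper's. The paper normalizes the dependence relation so that the smallest dilation group appears as a trigonometric polynomial factor $p(\gamma)\widehat{\phi}(\gamma)$ on the left and all strictly larger dilations appear as $\widehat{\phi}(\gamma/\lambda_k)$ with $\lambda_k>1$ on the right; it then substitutes the relation into itself $n$ times so that every argument $\gamma/\mu_k$ on the right lands inside $(-\varepsilon,\varepsilon)$ whenever $|\gamma|<\delta<\lambda^n\varepsilon$, concluding that $q(\gamma)\widehat{\phi}(\gamma)=0$ on $(-\delta,\delta)$ for a nonzero entire function $q$, hence $\widehat{\phi}=0$ a.e.\ on arbitrarily large intervals --- a contradiction with $\phi\neq 0$. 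You instead introduce the sharp vanishing radius $\varepsilon^{*}$ (resp.\ the sharp support radius $R$), restrict the single, un-iterated relation to annuli on which exactly one dilation group survives, and kill that group's coefficients via the positive-measure non-vanishing of $\widehat{\phi}$ there plus linear independence of characters with distinct frequencies. Your argument is arguably cleaner: it avoids the iteration and the bookkeeping with products of trigonometric polynomials, it yields the stronger conclusion that all coefficients vanish (rather than deriving $\phi=0$), and it treats case \emph{(ii)} as a genuine mirror image rather than leaving it as ``similar.'' The extremality argument you flag as delicate does work: by definition of the supremum, $\widehat{\phi}$ vanishes a.e.\ on $(-\varepsilon^{*},\varepsilon^{*})$ but not a.e.\ on any $(-r,r)$ with $r>\varepsilon^{*}$, so it is nonzero on a positive-measure subset of $\{\varepsilon^{*}<|u|<r\}$, which is exactly what the annulus step needs (and dually for $R$). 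Only cosmetic loose ends remain: note $0<\varepsilon^{*}<\infty$ and $0<R<\infty$ because $\phi\neq 0$ is square integrable with, in case \emph{(ii)}, compactly supported transform, and observe that the final group $G_p$ needs no upper annulus boundary since by then the relation involves a single dilation on all of $\mathbb{R}$.
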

\begin{proof} \emph{(i)} Suppose there is a FWS generated by $\phi$ that is linearly dependent.   WLG, we may assume  there are $b_1, \dots, b_M,\beta_1,\dots,\beta_N \in \mathbb{R}$, $ \lambda_{1}, \dots, \lambda_N >1 $, and $a_1, \dots, a_M, c_1,\dots,c_N \in \mathbb{C} \setminus \{0\}$ such that
\begin{eqnarray*}
  p(\gamma) \widehat{\phi}(\gamma )= \sum_{k=1}^{N} c_k e^{2 \pi i \beta_k \gamma}\widehat{\phi}(\frac{\gamma}{\lambda_k}) \quad (a.e), \mbox{ where }  p(\gamma)= \sum_{k=1}^{M} a_k e^{2 \pi i b_k \gamma}.
\end{eqnarray*}

Let $\delta>0$, let $\lambda = \min \{\lambda_{1}, \dots, \lambda_N \}$, and let $n$ be such that $0<\delta < \lambda^n\varepsilon$. For  almost all $\gamma  \in (-\delta,\delta)$, we have
\begin{eqnarray*}
  (\prod_{j=1}^{N} p(\frac{\gamma}{\lambda_j})) p(\gamma) \widehat{\phi}(\gamma )= \sum_{k=1}^{N} c_k e^{2 \pi i \beta_k \gamma}(\prod_{j=1, j \neq k }^{N} p(\frac{\gamma}{\lambda_j}))\sum_{i=1}^{N}c_i e^{2 \pi i \beta_i\lambda_k \gamma}\widehat{\phi}(\frac{\gamma}{\lambda_i\lambda_k})
\end{eqnarray*}
and $|\gamma/ (\lambda_i\lambda_k )| < \delta / \lambda^2$, for all $1\leq i,k \leq N$. After $n$ iterations of the last process, we can find a trigonometric polynomial $q$ and an equality in the form
\begin{eqnarray*}
  q(\gamma) \widehat{\phi}(\gamma )= \sum_{k=1}^{L} d_k e^{2 \pi i \alpha_k \gamma}\widehat{\phi}(\frac{\gamma}{\mu_k}) \quad (a.e)
\end{eqnarray*}
such that $|\gamma/\mu_k |< \delta / \lambda^n < \varepsilon $, for all $k=1,\dots, L$. Using Lemma 1.1, we deduce that $\widehat{\phi}(\gamma )=0$ for  all $\gamma  \in (-\delta,\delta)$. Since $\delta$ is an arbitrary positive number, we deduce the contradiction that $\phi(\gamma)=0$ almost everywhere.

The proof of Statement \emph{(ii)} can be obtained using similar to the steps in the proof of Statement \emph{(i)}. With the additional condition that $\widehat{\phi}$ is continuous, Statement (ii) is proved in \cite{Chr}.
\end{proof}

A measurable function $\phi$ is said to ultimately have a property $\mathcal{P}$ if there is $A>0$ such that $\phi(x)$ has the property $\mathcal{P}$ for all $x>A$.

\begin{thm}\label{Schwartz}
Let $\phi \in \mathcal{S}(\mathbb{R}) \setminus \{0\}$ be such that $|\ \widehat{\phi}(\gamma)  |\ $ and $|\ \widehat{\phi}(-\gamma)  |\ $ are ultimately decreasing. Every FWS generated by $\phi$ is linearly independent.
\end{thm}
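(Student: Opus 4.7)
The plan is to assume a nontrivial dependence $\sum_k c_k\phi(\lambda_k x-\beta_k)=0$, take Fourier transforms, and extract a dominant scale as $\gamma\to+\infty$ in order to force a trigonometric polynomial to vanish at infinity. On the Fourier side, grouping together the atoms that share a common dilation recasts the dependence as
\[
\sum_{j=1}^{s} P_j(\gamma)\,\widehat\phi(\gamma/\lambda^{(j)})=0,
\]
where $\lambda^{(1)}<\lambda^{(2)}<\cdots<\lambda^{(s)}$ enumerate the distinct dilations and each $P_j(\gamma)=\sum_{k:\lambda_k=\lambda^{(j)}}(c_k/\lambda^{(j)})e^{-2\pi i\beta_k\gamma/\lambda^{(j)}}$ is a trigonometric polynomial whose frequencies are pairwise distinct, hence $P_j\not\equiv 0$ whenever its block of coefficients is nonzero.

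By Lemma \ref{compact} I may assume $\widehat\phi$ is not compactly supported; combined with $\widehat\phi\in\mathcal{S}(\mathbb{R})$ and the ultimately decreasing hypothesis, there exists $A>0$ with $|\widehat\phi|$ strictly positive and nonincreasing on $(A,\infty)$ (symmetrically on $(-\infty,-A)$). For $\gamma>A\lambda^{(s)}$, all arguments $\gamma/\lambda^{(j)}$ exceed $A$ and monotonicity gives $|\widehat\phi(\gamma/\lambda^{(j)})|\leq|\widehat\phi(\gamma/\lambda^{(s)})|$ for $j<s$. Dividing by the nonvanishing top-scale factor isolates
\[
P_s(\gamma)=-\sum_{j=1}^{s-1}P_j(\gamma)\,\frac{\widehat\phi(\gamma/\lambda^{(j)})}{\widehat\phi(\gamma/\lambda^{(s)})}.
\]
If each ratio tends to zero as $\gamma\to+\infty$, boundedness of the $P_j$'s forces $P_s(\gamma)\to 0$. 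A nonzero trigonometric polynomial is almost periodic and cannot vanish at infinity, so $P_s\equiv 0$, contradicting the nontriviality of its coefficient block. Removing the vanished top scale and iterating drives $s$ down to $1$, where $P_1(\gamma)\widehat\phi(\gamma/\lambda^{(1)})=0$ on $(A\lambda^{(1)},\infty)$ immediately yields $P_1\equiv 0$; the symmetric hypothesis on $|\widehat\phi(-\gamma)|$ permits the same run on $(-\infty,-A)$ in case $\widehat\phi$ eventually vanishes on the positive tail.

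The hard step is the ratio claim $|\widehat\phi(\gamma/\lambda^{(j)})|/|\widehat\phi(\gamma/\lambda^{(s)})|\to 0$. Schwartz decay supplies only $|\widehat\phi(\gamma)|=O(\gamma^{-m})$ for every $m$ with no matching lower bound on the denominator, and pointwise monotonicity gives only $\leq 1$; the `ultimately decreasing' hypothesis is essential precisely to exclude oscillatory dips of $|\widehat\phi|$ between the scales that could otherwise keep the ratio bounded away from zero. I would close this step by combining monotonicity on the whole interval $[\gamma/\lambda^{(s)},\gamma/\lambda^{(j)}]$ with the integrability $\widehat\phi\in L^1$ inherited from Schwartz, using that monotonicity yields
\[
\int_{\gamma/\lambda^{(s)}}^{\gamma/\lambda^{(j)}}|\widehat\phi(u)|\,du \;\geq\; \gamma\,\frac{\lambda^{(s)}-\lambda^{(j)}}{\lambda^{(j)}\lambda^{(s)}}\,|\widehat\phi(\gamma/\lambda^{(j)})|,
\]
so a widely spaced subsequence along which the ratio stayed bounded below would give disjoint contributions summing past $\|\widehat\phi\|_{L^1}$. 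This is the delicate point of the argument, and it is where the full strength of monotonicity—rather than merely Schwartz decay—is used.
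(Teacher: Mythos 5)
Your overall architecture --- pass to the Fourier side, group the atoms by distinct dilations, use monotonicity to isolate the dominant scale, and force a nonzero trigonometric polynomial to vanish at infinity --- is the same as the paper's, but the step you yourself flag as delicate is exactly where the argument breaks. The claim that $|\widehat\phi(\gamma/\lambda^{(j)})|/|\widehat\phi(\gamma/\lambda^{(s)})|\to 0$ as a \emph{full limit} $\gamma\to+\infty$ is false for general Schwartz functions with ultimately decreasing modulus: take $|\widehat\phi|$ to be a smoothing of the step function equal to $4^{-n^2}$ on $[4^n,4^{n+1})$. This is decreasing, decays faster than any polynomial, and can be realized as $e^{-h}$ with $h$ smooth and slowly varying so that $\widehat\phi$ is genuinely Schwartz; yet for $\gamma$ in the middle of each plateau the ratio between scales $\gamma$ and $2\gamma$ equals $1$, so the ratio does not tend to zero. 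Your proposed $L^1$ justification does not exclude this: your inequality bounds the $n$-th disjoint integral below by $c\,\gamma_n|\widehat\phi(\gamma_n/\lambda^{(j)})|\ge c\,\delta\,\gamma_n|\widehat\phi(\gamma_n/\lambda^{(s)})|$, but Schwartz decay forces $\gamma\,|\widehat\phi(\gamma/\lambda^{(s)})|\to 0$ rapidly, so these contributions are summable along any spaced subsequence and never exceed $\|\widehat\phi\|_{L^1}$; no contradiction results.

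This obstruction is precisely what the paper's proof is built around. It establishes only a \emph{subsequence} version of the decay --- for every $m$ there is $n_m$ with $|\widehat\phi(\gamma\lambda^{(n_m+2)/2})|\le m^{-1}|\widehat\phi(\gamma\lambda^{n_m/2})|$ uniformly for $\gamma$ in the fixed window $(1,\sqrt[4]{\lambda})$ --- proved by a weighted-series argument that plays the rapid decay of $\widehat\phi$ against weights $\gamma^l\lambda^{nl/2}$ with $l$ large. But once the conclusion holds only along a subsequence, the step ``a nonzero trigonometric polynomial cannot tend to $0$'' is no longer available: along a sparse sequence a nonconstant trigonometric polynomial can certainly converge. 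The paper compensates by retaining the whole window of base points $\gamma$, integrating the limiting relation over $(1,\sqrt[4]{\lambda})$ via dominated convergence, and using that $\int_1^{\sqrt[4]{\lambda}} e^{2\pi i(\beta_k-\beta_M)\gamma\lambda^{n_m/2}}\,d\gamma\to 0$ to isolate the constant term and conclude $c_M=0$. To repair your write-up you need both replacements: the subsequence (and $\gamma$-uniform) form of the ratio claim, and an integration or equidistribution argument that kills the polynomial along that subsequence.
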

\begin{proof} Assume  $\mathcal{W}(\phi, \Lambda)$  is linearly dependent in $L^2(\mathbb{R})$ for some finite subset $\Lambda$ of $(0.\infty) \times \mathbb{R}$. We may assume without loss of generality there are $c_1,\dots,c_N \in \mathbb{C} \setminus \{0\}$, $\beta_1,\dots,\beta_N \in \mathbb{R}$, $\beta_1,\dots,\beta_M$ are 2 by 2 distinct,  and
$\lambda_1,\dots,\lambda_N> 1$ such that
\begin{eqnarray*}
\sum_{k=1}^M c_k e^{2 \pi i \beta_k \gamma}\widehat{\phi}(\gamma)= \sum_{k=M+1}^{N} c_k e^{2 \pi i \beta_k \gamma}\widehat{\phi}(\lambda_k \gamma).
\end{eqnarray*}

Since the case $M=1$ follows using Proposition  \ref{smallbig}, we assume that $M >1$.

Since $|\ \widehat{\phi}(\gamma)  |\ $ and $|\ \widehat{\phi}(-\gamma)  |\ $ are  ultimately decreasing, then  $\widehat{\phi}$ is compactly supported, in which case $\mathcal{W}(\phi, \Lambda)$ is independent by Lemma \ref{compact}, or either $| \widehat{\phi}(\gamma)  |$ or  $|\ \widehat{\phi}(-\gamma)  |\ $ is ultimately positive. Let's  suppose that $| \widehat{\phi}(\gamma)  |$ is ultimately positive.

Let $\lambda = \min \{\lambda_{M+1},\dots,\lambda_N\}$. We claim the following:
\begin{eqnarray*}
\forall m>0, \exists n_m > m, \forall \gamma \in (1, \sqrt[4]{\lambda}), \quad | \widehat{\phi}(\gamma \lambda^{\frac{n_m+2}{2}})| \leq  \frac{1}{m}|\widehat{\phi}(\gamma \lambda^{\frac{n_m}{2}})|.
\end{eqnarray*}

If the claim is not true, then
\begin{eqnarray*}
\exists m>0, \forall   n> m, \exists \gamma_n \in (1, \sqrt[4]{\lambda}), \quad |\widehat{\phi}(\gamma_n \lambda^{\frac{n+2}{2}})| > \frac{1}{m}|\widehat{\phi}(\gamma_n \lambda^{\frac{n}{2}})|.
\end{eqnarray*}
We can also require that $m$ is big enough to have $|\widehat{\phi}(\gamma_n \lambda^{\frac{n}{2}})| >0$, for all $n>m$.  Therefore, for $n_0 >m $ and for $l>0$ , we have the inequality
\begin{eqnarray*}
\sum_{n\geq n_0} \gamma_n^l \lambda^{\frac{nl+2l}{2}} |\widehat{\phi}(\gamma_n \lambda^{\frac{n}{2}})| > \frac{1}{m}\sum_{n\geq n_0} \gamma_n^l \lambda^{\frac{nl+2l}{2}} |\widehat{\phi}(\gamma_n \lambda^{\frac{n}{2}})|,
\end{eqnarray*}
for which each member is a convergent series because $\widehat{\phi} \in \mathcal{S}(\mathbb{R})$.  The last inequality can also be written as
\begin{eqnarray}
\sum_{n\geq n_0} \gamma_n^l \lambda^{\frac{nl+2l}{2}} |\widehat{\phi}(\gamma_n \lambda^{\frac{n+2}{2}})| &>& \frac{1}{m} \gamma_{n_0}^l \lambda^{\frac{n_0l+2l}{2}} |\widehat{\phi}(\gamma_{n_0} \lambda^{\frac{n_0}{2}})| \\ &+&
\frac{\lambda^{\frac{l}{4}}}{m}\sum_{n\geq n_0} \gamma_{n+1}^l \lambda^{\frac{2nl+5l}{4}} |\widehat{\phi}(\gamma_{n+1} \lambda^{\frac{n+1}{2}})|. \nonumber
\end{eqnarray}

 The fact that $\gamma_n \in (1, \sqrt[4]{\lambda})$ yields the followings:
\begin{eqnarray}
 \frac{\gamma_{n+1}^l \lambda^{\frac{2nl+5l}{4}}}{\gamma_n^l \lambda^{\frac{nl+2l}{2}}} &=& \frac{\gamma_{n+1}^l }{\gamma_n^l }\lambda^{\frac{l}{4}} > 1 \\
  \frac{\gamma_{n+1} \lambda^{\frac{n+1}{2}}}{\gamma_n \lambda^{\frac{n+2}{2}}} &=& \frac{\gamma_{n+1} }{\gamma_n }\lambda^{-\frac{1}{2}} < \lambda^{-\frac{l}{4}} <1
\end{eqnarray}

Let $A>0$ such that $|\widehat{\phi}(\gamma) |$ is decreasing for all $\gamma >A$.  Let $n_0$ be large enough to have $\lambda^{\frac{n_0}{2}} > A$. Therefore, (2.1)-(2.3) imply the inequality
\begin{eqnarray*}
(1 - \frac{\lambda^{\frac{l}{4}}}{m})\sum_{n\geq n_0} \gamma_n^l \lambda^{\frac{nl+2l}{2}} |\widehat{\phi}(\gamma_n \lambda^{\frac{n+2}{2}})| &>& \frac{1}{m} \gamma_{n_0}^l \lambda^{\frac{n_0l+2l}{2}} |\widehat{\phi}(\gamma_{n_0} \lambda^{\frac{n_0}{2}})|.
\end{eqnarray*}
Taking $l$ large enough makes negative the left hand side of  the last inequality, which yields a contradiction. Hence, the claim is true, and so
\begin{eqnarray*}
\forall \gamma \in (1, \sqrt[4]{\lambda}), \quad \lim_{m \rightarrow \infty} \frac{\widehat{\phi}(\gamma \lambda^{\frac{n_m+2}{2}})}{\widehat{\phi}(\gamma \lambda^{\frac{n_m}{2}})} = 0.
\end{eqnarray*}

Since $\lambda = \min \{\lambda_{M+1},\dots,\lambda_N\}$ and $|\widehat{\phi}|$ is ultimately decreasing, we obtain
\begin{eqnarray*}
 \forall \gamma \in (1, \sqrt[4]{\lambda}), \quad &&\lim_{m \rightarrow \infty} |\sum_{k=1}^M c_k e^{2 \pi i \beta_k \gamma \lambda^{\frac{n_m}{2}} }| \leq  \lim_{m \rightarrow \infty} \sum_{k=M+1}^{N} |c_k \frac{\widehat{\phi}(\gamma\lambda_k \lambda^{\frac{n_m}{2}})}{\widehat{\phi}(\gamma \lambda^{\frac{n_m}{2}})}|\\
  &\leq& \lim_{m \rightarrow \infty} \sum_{k=M+1}^{N} |c_k \frac{\widehat{\phi}(\gamma\lambda^{\frac{n_m+2}{2}})}{\widehat{\phi}(\gamma \lambda^{\frac{n_m}{2}})}|=0
\end{eqnarray*}
Consequently,
\begin{eqnarray*}
 \forall \gamma \in (1, \sqrt[4]{\lambda}), \quad \lim_{m \rightarrow \infty} \sum_{k=1}^{M-1} c_k e^{2 \pi i (\beta_k - \beta_M)\gamma \lambda^{\frac{n_m}{2}} }=  - c_M.
\end{eqnarray*}
Therefore, using Lebesgue's dominated convergence theorem, we have
\begin{eqnarray*}
 c_M(1-\sqrt[4]{\lambda})&=& \lim_{m \rightarrow \infty} \sum_{k=1}^{M-1} \int_{1}^{\sqrt[4]{\lambda}}c_k e^{2 \pi i (\beta_k - \beta_M)\gamma \lambda^{\frac{n_m}{2}} }  d\gamma   \\
   a_M(1-\sqrt[4]{\lambda})&=&\lim_{m \rightarrow \infty} \sum_{k=1}^{M-1} c_k  \frac{e^{2 \pi i (\beta_k - \beta_M)\lambda^{\frac{2n_m+1}{4}} } -e^{2 \pi i (\beta_k - \beta_M)\lambda^{\frac{n_m}{2}} } }{2 \pi i (\beta_k - \beta_M) \lambda^{\frac{n_m}{2}}}=0,
\end{eqnarray*}
and so we obtain the contradiction $a_M=0$.
\end{proof}

Similar to the result of Theorem \ref{Schwartz} is not known for FGS. A theorem in \cite{Ben} proves the independence of each four point FGS generated by a nonzero $L^2$-function $g$ such that $g(x)$ and $g(-x)$ are ultimately positive and decreasing.

\section{FWS generated by  Functions with certain behaviour at Infinity}
Let $\mathcal{F}$ be a space of real valued functions $f$ that are ultimately defined, i.e., $f$ is everywhere defined on an interval $(A, \infty)$ for some real number $A$. We define on $\mathcal{F}$ the equivalence relation $\sim$  by writing $f \sim g$ to mean $f(x)$ and $g(x)$ are ultimately equal. The equivalence class associated with $f \in \mathcal{F}$ is denoted by $germ(f)$. Addition and multiplication of functions are compatible with respect to $\sim$, and so the set $\overline{\mathcal{F}}= \{ germ(f): f \in \mathcal{F}\} $ is a commutative ring. An example of $\mathcal{F}$ is the space $\mathcal{LE}(\mathbb{R})$ of  logarithmico-exponential functions. Some properties of $\mathcal{LE}(\mathbb{R})$ are listed in the following proposition.
\begin{prop}[\cite{Har}]\label{hardy} Let $\overline{\mathcal{LE}(\mathbb{R})}=\{ germ(f): f \in \mathcal{LE}(\mathbb{R}) \}$
\begin{enumerate}
  \item Every Logarithmico-exponential function is ultimately  analytic.
  \item If $f,g \in \mathcal{LE}(\mathbb{R})$ such that $germ(f) \neq 0$ and $germ(g) \neq 0$, then the limit at infinity of $f/g$ or $g/f$ is finite.
If the limit at infinity of $f/g$ is finite we say that $f$ is asymptotically smaller than $g$ and we write $germ(f) \preceq germ(g)$.
 \item $\overline{\mathcal{LE}(\mathbb{R})}$  is a Hardy field, i.e, it is a field that is closed by differentiation and it is well-ordered with respect to the relation $\preceq$ .
\end{enumerate}
\end{prop}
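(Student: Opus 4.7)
Proposition \ref{hardy} is a classical result of Hardy, so the plan is to sketch the standard argument: I would prove the three statements by a single simultaneous structural induction on the LE-expression used to define $f$. The inductive hypothesis asserts, for every LE function already constructed, all three of the following: ultimate analyticity, ultimate nonvanishing whenever the germ is nonzero, and existence of a limit in $[-\infty,+\infty]$ at $+\infty$. With this package in hand, statements (1) and (3) become almost bookkeeping; the real content sits in (2).

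Before attacking (2), I would dispatch the easy parts. For (1): the base cases (constants, $x$, $e^x$, $\log x$) are analytic on their natural domains; analyticity is preserved under $+,-,\times, \circ$ unconditionally and under $\div$ wherever the denominator is nonzero. The only subtlety is that to speak of $f/g$ as an ultimately defined analytic germ one needs $g$ to be ultimately nonzero, which is exactly the nonvanishing half of (2). For (3): closure under differentiation is a second structural induction, since $\frac{d}{dx}$ of each base function is LE and the product/quotient/chain rules manifestly preserve membership in $\mathcal{LE}(\mathbb{R})$; the field axioms on $\overline{\mathcal{LE}(\mathbb{R})}$ are immediate from the analogous operations on $\mathcal{F}$, with multiplicative inverses existing precisely because of the ultimate-nonvanishing in (2); and total order for $\preceq$ is a rewording of (2).

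For (2) itself, I would argue as follows. Base cases are trivial. For $e^{f}$ and $\log f$, the conclusion reads off directly from the IH on $f$ together with the known behavior of $\exp$ and $\log$ at finite or infinite boundary values (with $\log f$ only needing treatment when $f$ is ultimately positive, which the IH supplies). For $f\cdot g$ and $f/g$, combine the limits granted by the IH, using the ultimate-nonvanishing of $g$ to make sense of $f/g$. The delicate case is $f+g$ when $f\to +\infty$ and $g\to -\infty$: one rewrites
\begin{equation*}
f+g \;=\; f\bigl(1 + g/f\bigr),
\end{equation*}
invokes the IH on the simpler LE function $g/f$ to get a limit $\ell \in [-\infty,+\infty]$, and splits into cases. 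If $\ell\neq -1$, then $1+g/f$ is ultimately of constant nonzero sign, so $f+g$ has a definite limit and is ultimately nonvanishing. If $\ell=-1$, one factors the leading asymptotic term dictated by the hierarchy of LE-growths and reduces to an LE-expression of strictly smaller complexity in a carefully chosen well-founded measure, closing the induction.

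The main obstacle is precisely the $\ell=-1$ cancellation case: making the reduction rigorous requires Hardy's complexity measure on LE-expressions (counting nested $\exp$/$\log$ and polynomial degree, ordered lexicographically) and verifying that the factored remainder is an LE-expression strictly lower in this well-order. This is the heart of Hardy's theorem and the reason the proposition is cited to \cite{Har} rather than reproved in situ; once that hierarchical induction is set up, everything else in the proposition follows mechanically from the IH.
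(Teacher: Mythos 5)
The paper offers no proof of this proposition at all: it is stated as a quotation of Hardy's 1912 results and the label \texttt{[\cite{Har}]} is the entire justification. So there is no in-paper argument to compare yours against; what you have written is a sketch of the classical proof, and it must be judged on its own.

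As a sketch it identifies the right skeleton (simultaneous structural induction establishing ultimate analyticity, ultimate nonvanishing, and existence of a limit in $[-\infty,+\infty]$), and you are right that (1) and (3) reduce to bookkeeping once (2) is in hand --- noting that ``well-ordered'' in the statement really means ``totally ordered,'' since $germ(x)\succ germ(x^{1/2})\succ germ(x^{1/3})\succ\cdots$ is an infinite strictly descending chain, which you implicitly and correctly handle by proving only totality. But the argument has a genuine gap exactly where you flag it, and the gap is not merely a matter of filling in routine detail. In the cancellation case $f\to+\infty$, $g\to-\infty$ with $g/f\to-1$, you propose to ``invoke the IH on the simpler LE function $g/f$,'' yet $g/f$ is \emph{not} structurally simpler than $f+g$: it is built from the same subexpressions $f$ and $g$ combined by division, so any induction keyed to the syntax tree of the expression does not terminate. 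Hardy's actual resolution is to first normalize an LE function into a canonical form (a finite tower of $\exp$'s and $\log$'s applied to algebraic combinations, stratified by ``order of infinity''), define the rank on that normal form rather than on the raw expression, and prove that the remainder after extracting the leading asymptotic term strictly drops in rank. Asserting that ``once that hierarchical induction is set up, everything else follows mechanically'' concedes the theorem rather than proving it: the normal form and the rank drop \emph{are} the theorem. Since the paper itself delegates this to \cite{Har}, your sketch is an acceptable gloss on the citation, but it should not be mistaken for a self-contained proof.
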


The proof of the following theorem is following the same steps used to prove a similar theorem on FGS \cite{Ben}. Nevertheless, we have to check the points that make different the proof of the new theorem.

\begin{thm}\label{hardyfield} Let $\phi \in L^2(\mathbb{R}) \setminus \{0\}$ such that $\widehat{\phi}(\gamma)$  is a complex linear combination of square integrable functions whose germs $ \in \overline{\mathcal{LE}(\mathbb{R})}$. Then every non trivial FWS generated by $\phi$  is linearly independent.
\end{thm}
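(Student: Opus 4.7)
The plan is to transplant the argument used for the Gabor analog \cite{Ben} to the wavelet setting, with multiplicative dilations playing the role of translations. Assume for contradiction that some nontrivial FWS generated by $\phi$ is linearly dependent. Applying the Fourier transform and performing the usual normalizations (isolating terms with $\lambda_k = 1$, inverting to keep $\lambda_k > 1$ on one side, and grouping equal phases), one obtains an a.e.\ identity
\begin{equation*}
\sum_{k=1}^M c_k e^{2\pi i \beta_k \gamma}\widehat{\phi}(\gamma) \;=\; \sum_{k=M+1}^N c_k e^{2\pi i \beta_k \gamma}\widehat{\phi}(\lambda_k \gamma)
\end{equation*}
with $c_k \neq 0$, pairwise distinct $\beta_1,\dots,\beta_M$, and $\lambda_k > 1$ for $k > M$. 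The cases $M \leq 1$ are already settled by Lemma~\ref{smallbig}, so I may assume $M \geq 2$.

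Next I would exploit the Hardy-field structure granted by Proposition~\ref{hardy}. Writing $\widehat{\phi} = \sum_{j=1}^J d_j f_j$ with each $f_j \in L^2$ and $germ(f_j) \in \overline{\mathcal{LE}(\mathbb{R})}$, the total order $\preceq$ on germs lets me gather terms of equivalent germ and pick a dominant germ $f = f_{j_0}$, so that $\widehat{\phi}(\gamma)/f(\gamma) \to d \neq 0$ as $\gamma \to \infty$. Since $\widehat{\phi} \in L^2$, $f$ is ultimately monotone decreasing to $0$; since $f(\lambda\gamma)/f(\gamma)$ again lies in $\overline{\mathcal{LE}(\mathbb{R})}$, the limit $\rho(\lambda) := \lim_{\gamma\to\infty} f(\lambda\gamma)/f(\gamma)$ exists in $[0,1]$ for every $\lambda > 1$. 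Hence $\widehat{\phi}(\lambda_k\gamma)/f(\gamma) \to d\rho_k$ with $\rho_k := \rho(\lambda_k)$.

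Dividing the displayed identity by $f(\gamma)$ and comparing the two sides, the bounded trigonometric polynomial
\begin{equation*}
P(\gamma) \;:=\; \sum_{k=1}^M c_k e^{2\pi i \beta_k \gamma} \;-\; \sum_{k=M+1}^N c_k \rho_k e^{2\pi i \beta_k \gamma}
\end{equation*}
satisfies $P(\gamma)\to 0$ pointwise as $\gamma \to \infty$. Taking the time-average of $|P|^2$ over $[0,T]$ and exploiting orthogonality of distinct characters shows the mean equals $\sum_\beta |P_\beta|^2$, where $P_\beta$ is the coefficient at frequency $\beta$; but this mean is $0$ since $|P|^2$ is bounded and tends to $0$, so every $P_\beta = 0$. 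Matching coefficients at each distinct frequency then forces $c_k = 0$ whenever $\beta_k$ ($k \leq M$) fails to coincide with any $\beta_l$ ($l > M$) having $\rho_l \neq 0$, contradicting $c_k \neq 0$.

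The main point to verify, distinguishing this argument from its Gabor predecessor, is the behavior of the asymptotic ratio $\rho(\lambda)$. Here $\rho(\lambda) \in [0,1]$ for $\lambda > 1$, whereas in the Gabor setting the corresponding translation ratio ranges over $(0,\infty)$. In particular, when $f$ decays faster than any polynomial (e.g., exponentially), $\rho(\lambda_k) = 0$ for every $k > M$ and the right-hand sum in $P$ vanishes, giving $c_k = 0$ for $k \leq M$ at once. The residual case, where some frequencies align and the relevant $\rho_l$ are nonzero, is the main obstacle; I would handle it by iterating across successive germ-levels of $\widehat{\phi}$: subtracting the leading contribution $df$ and rerunning the argument yields new ratios $\rho^{(1)}_k$ distinct from $\rho_k$, and the accumulated linear constraints on the $c_k$'s admit only the trivial solution because different Hardy-field germs transform differently under dilation. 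Well-ordering of $\overline{\mathcal{LE}(\mathbb{R})}$ guarantees termination after finitely many such passes, paralleling the finite iteration used in \cite{Ben}.
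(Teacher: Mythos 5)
Your reduction, the extraction of a dominant germ $f$ with dilation ratios $\rho(\lambda_k)=\lim_{\gamma\to\infty} f(\lambda_k\gamma)/f(\gamma)$, and the mean-square argument showing that the limiting trigonometric polynomial $P$ vanishes identically are all sound. But the proof stops exactly where the real difficulty begins. Matching coefficients in $P\equiv 0$ only yields the relations $c_k=\sum_{l>M,\,\beta_l=\beta_k}c_l\rho_l$, which are perfectly consistent when frequencies on the two sides coincide. Your plan for this ``residual case'' --- subtract $df$ and rerun the argument at successive germ levels --- rests on the assertion that different Hardy-field germs transform differently under dilation, and that assertion is false: $\gamma^{-1}$ and $\gamma^{-1}\ln\gamma$ both have dilation ratio $\lambda^{-1}$, while $e^{-\gamma}$, $e^{-2\gamma}$, and $\gamma e^{-\gamma}$ all have ratio $0$, so the accumulated constraints need not be independent. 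Worse, a pure power $\gamma^{-s}$ satisfies its leading-order dilation relation \emph{exactly}, so subtracting the leading term leaves nothing and the iteration produces no new constraint at any level. Leading-order asymptotics of the individual dilates therefore cannot close the argument, and there is no guarantee your scheme terminates in a contradiction.

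The paper avoids the residual case by a different organization. It first groups all dilates sharing a common frequency into single functions $f_k(\gamma)=\sum_n c_{(k,n)}\widehat{\phi}(\lambda_{(k,n)}\gamma)$, so the dependence reads $\sum_k e^{2\pi i\beta_k\gamma}f_k(\gamma)=0$ with \emph{distinct} $\beta_k$. It then shows that some $f_{k_0}$ has nonzero germ --- this requires two ingredients you omit entirely, namely the analyticity of $\widehat{\phi}$ on $(0,\infty)$ (obtained by iterating the functional equation) and the linear independence of translates of a nonzero $L^2$ function after the substitution $\gamma=e^t$ --- and divides by the $\preceq$-maximal such $f_{k_0}$. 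Passing to a limit along $\gamma_n\to\infty$ with $e^{2\pi i\beta_k\gamma_n}\to L_k\neq 0$ gives $\sum_k L_k l_k e^{2\pi i\beta_k\gamma}=0$ with $l_{k_0}=1$, contradicting the independence of distinct characters outright, with no residual case. To salvage your approach you would need to import precisely these two ingredients (frequency grouping, and the fact that a nonzero combination of dilates of $\widehat{\phi}$ cannot have zero germ), at which point you would have reconstructed the paper's proof.
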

\begin{proof}    For the sake of simplicity  we assume that $\widehat{\phi}(\gamma) \in \mathcal{LE}(\mathbb{R})$, since the generalization is straightforward. Suppose there is a linearly dependent  FWS generated by $\phi$. We may assume without loss of generality that
\begin{eqnarray}\label{hardyfield1}
\sum_{k=1}^M c_k e^{2 \pi i \beta_k \gamma}\widehat{\phi}(\gamma)= \sum_{k=M+1}^{N} c_k e^{2 \pi i \beta_k \gamma}\widehat{\phi}(\lambda_k \gamma),
\end{eqnarray}
where $c_1,\dots,c_N \in \mathbb{C}$, $\beta_1,\dots,\beta_N \in \mathbb{R}$, $\beta_1,\dots,\beta_M$ are distinct,  and
$\lambda_1,\dots,\lambda_N> 1$.

\emph{(i)} It is known that $\widehat{\phi}$ is ultimately analytic \cite{Har}, i.e., analytic on an interval
 $(A, \infty)$, for some $A>0$. Let $I$ be a bounded open interval $\subset (0,\infty)$. Iterating \ref{hardyfield1} enough times, we can write
 \begin{eqnarray*}
p(\gamma)\widehat{\phi}(\gamma)= \sum_{k=1}^{n} a_k e^{2 \pi i \beta_k' \gamma}\widehat{\phi}(\lambda_k' \gamma),
\end{eqnarray*}
where $p(\gamma)$ is a trigonometric polynomial and $\lambda_k'$ are big enough to have $\lambda_k' \gamma >A$, for all $\gamma \in I$. Therefore, $p(\gamma)\widehat{\phi}(\gamma)$ is analytic on $I$, and so $\widehat{\phi}(\gamma)=f(\gamma)/p(\gamma)$, where $f$ is analytic on $(0,\infty)$. Since $p(\gamma)$ is also analytic, for each $\gamma_0 >0$, there is an open interval $I_0$ containing $\gamma_0 $ and an integer $n$ such that
\begin{eqnarray}
  \forall \gamma \in I_0, \quad \widehat{\phi}(\gamma)= \frac{f(\gamma)}{p(\gamma)} = (\gamma-\gamma_0)^n h(\gamma),
\end{eqnarray}
where $h$ is analytic and never vanishes on $I_0$. The fact that $\widehat{\phi} \in L^2(\mathbb{R})$ implies $n \geq 0$, and so $\widehat{\phi}$ is analytic on $I_0$. We then conclude that $\widehat{\phi}$ is analytic on $(0,\infty)$. Similarly, we can prove that $\widehat{\phi}$ is analytic on $(-\infty,0)$. Thus, in the following we assume that $\widehat{\phi}$ is analytic on $\mathbb{R} \setminus \{0\}$

{\it ii. }After relabeling, we may suppose that
\begin{eqnarray}\label{hardyfield2}
  \sum_{k=1}^N e^{2\pi i \beta_k \gamma} f_k(\gamma) =0, \quad \mbox{where} \quad  f_k(\gamma) = \sum_{n=1}^{N_k} c_{(k,n)}\widehat{\phi}(\lambda_{(k,n)}\gamma),
\end{eqnarray}
where $\beta_1,\ldots, \beta_N \in \mathbb{R}$ are distinct and
 $c_{(k,1)}, c_{(k,2)},\ldots,c_{(k,N_k)} \in \mathbb{C} \setminus \{ 0 \}$ and $\lambda_{(k,1)},\ldots,\lambda_{(k,N_k)}>0$, for each $k=1,\ldots, N$.

 Let $k \in \{1,\ldots, N\}$ and assume that $germ(f_k)=0$. Since $f_k$ is analytic on $(0,\infty)$, then $f_k$ is equal to zero on $(0,\infty)$. But $e^{\frac{t}{2}}f_k(e^t)$ is a sum of time translates of  $e^{\frac{t}{2}}\widehat{\phi}(e^t) \in L^2(\mathbb{R})$, and so $\widehat{\phi}(\gamma)=0, \forall \gamma \in (0,\infty)$. Similarly, if $f_k(-\gamma)$ is ultimately equal to zero, then  $\widehat{\phi}(\gamma)=0, \forall \gamma \in (-\infty,0)$. We conclude there is at least one $k \in \{1,\ldots, N\}$ such that $germ(f_k)\neq 0$ or $germ(\check{f}_k)\neq 0$, where $\check{f}_k(\gamma)= f_k(-\gamma)$.

{\it iii. } Without loss of generality, we may assume there is at least one $k \in \{1,\ldots, N\}$ such that $germ(f_k)\neq 0$. Using this and Proposition \ref{hardy}, there is $k_0 \in \{1,\ldots, N\}$ such that $germ(f_{k_0})\succeq germ(f_{k})$ for all $k \in \{1,\ldots, N\}$ and $germ(f_{k_0})\neq 0$.

Now, let $(\gamma_n)$ be a sequence converging to infinity such that
\begin{eqnarray*}
 \forall k \in \{1,\ldots, N\}, \quad \lim_{n \rightarrow \infty} e^{2 \pi i \beta_k \gamma_n} = L_k \in \mathbb{C}\setminus \{0\}.
\end{eqnarray*}   Therefore,
\begin{eqnarray}
\lim_{n \rightarrow \infty}  \sum_{k=1}^N e^{2\pi i \beta_k (\gamma+\gamma_n)}  \frac{f_k(\gamma+\gamma_n)}{ f_{k_0}(\gamma+\gamma_n)} = \sum_{k=1}^N L_kl_k e^{2\pi i \beta_k \gamma} =0,
\end{eqnarray}
where $l_k \in \mathbb{R}$ and $l_{k_0}=1$. Thus, the last quality in 4.4 is a contradiction.
\end{proof}

A generalization of Theorem \ref{hardyfield} can be stated if $\mathcal{LE}(\mathbb{R})$ is replaced with $\mathcal{F}$, a space of real valued functions that are ultimately defined such that $\overline{\mathcal{F}}$ is a subset of a Hardy field closed under dilations with one of the following requirements:
\begin{itemize}
  \item Each one of the function $\widehat{\phi}(\gamma)$  and $\widehat{\phi}(-\gamma)$  is a complex linear combination of square integrable ultimately analytic functions whose germs are in $\overline{\mathcal{F}}$.
  \item  The function $\widehat{\phi}$  is a complex linear combination of square integrable functions that are analytic at zero and whose germs are in $\overline{\mathcal{F}}$.
\end{itemize}

Similar to Theorem \ref{hardyfield} for FGS implies the independence of each FGS generated by a nonzero square integrable logarithmo-exponential function or by its Fourier transform \cite{Ben}. Meanwhile, Theorem \ref{hardyfield} only implies the linear independence of each FWS generated by a nonzero square integrable function whose Fourier transform is a logarithmo-exponential function. 

\section*{Acknowledgements}
The author gratefully thanks Chris Heil for instructive discussions that dramatically  improved this paper.
The author also gratefully thanks P\'{e}ter  Varj\'{u} and Boris Solomyak for kindly replying to questions about some results in this paper.

\bibliographystyle{amsplain} 

\end{document}